\documentclass{amsart}

\usepackage{latexsym}
\usepackage{amsmath}
\usepackage{amssymb}

\usepackage{multicol}

\usepackage[cp1251]{inputenc}
\usepackage[english]{babel}

\theoremstyle{plain}

\newtheorem{theorem}{Theorem} 
\newtheorem{proposition}{Proposition} 
\newtheorem{lem}{Lemma} 
\newtheorem{cor}{Corollary} 

\theoremstyle{definition}
\newtheorem{df}{Definition} 
\newtheorem{rem}{Remark} 

\newcommand{\sph}{S}
\newcommand{\cn}{\mathbb{C}^n}
\newcommand{\cpn}{S_n}
\newcommand{\meal}{m}
\newcommand{\si}{\sigma}
\newcommand{\sgh}{{\sigma}_n}

\newcommand{\Dbb}{\mathbb{D}}
\newcommand{\Tbb}{\mathbb{T}}
\newcommand{\Nbb}{\mathbb{N}}

\newcommand{\zz}{\mathbb{Z}_+^2}

\newcommand{\spn}{S_n}

\newcommand{\za}{\zeta}
\newcommand{\de}{\delta}

\newcommand{\la}{\lambda}
\newcommand{\al}{\alpha}
\newcommand{\riz}{\Pi}

\newcommand{\dg}{\textrm{deg}\,}
\newcommand{\spec}{\textrm{spec}\,}

\numberwithin{equation}{section}

\begin{document}

\date{}

\author{Evgueni Doubtsov}
\address{St.~Petersburg Department
of Steklov Mathematical Institute, Fontanka 27, St.~Petersburg 191023, Russia}
\email{dubtsov@pdmi.ras.ru}

\title[Mutual singularity of Riesz products]{Mutual singularity of Riesz products on the unit sphere}

\begin{abstract}
We prove analogs of Peyri\`ere's mutual singularity theorem
for standard and generalized Riesz products on the unit sphere of $\mathbb{C}^n$, $n\ge 2$.
As a corollary, we obtain an analog of Zygmund's dichotomy for the Riesz products under consideration.

\end{abstract}

\keywords{Riesz product, mutual singularity, Zygmund's dichotomy, slice-measure.}

\thanks{This research was supported by the Russian Science Foundation (grant No.~23-11-00171),
https://rscf.ru/project/23-11-00171/}

\maketitle

\section{Introduction}\label{s_int}

In the present paper, we consider Riesz product measures and related objects
on the unit sphere $S = \spn = \{\za\in\cn : |\za| = 1\}$, $n \ge 2$.
So we start by recalling the relevant definitions of analysis on the sphere $\spn$.

\subsection{Basics of harmonic analysis on the unit sphere of $\cn$}

Let $\mathcal{U}(n)$ denote the group of unitary operators on the Hilbert space $\cn$, $n\ge 2$.
Observe that $\spn = \mathcal{U}(n)/\mathcal{U}(n-1)$, hence, $\spn$ is a homogeneous space.
General constructions of abstract harmonic analysis are explicitly implemented
on the sphere $\spn$ in terms of the spaces $H(p,q)$, $(p,q)\in\zz$.

\begin{df}
Fix a dimension $n$, $n\ge 2$.
Let $H(p, q)= H(p,q; n)$ denote the space of all homogeneous harmonic polynomials
of bidegree $(p, q) \in \zz$.
By definition, this means that the polynomials under consideration have degree
$p$ in variables $z_1, z_2, \dots, z_n$, degree $q$ in variables
$\overline{z}_1, \overline{z}_2, \dots, \overline{z}_n$,
and have total degree $p+q$.

For the restriction of $H(p, q)$ on $S$, one uses the same symbol.
The elements of $H(p, q)$ are often called \textit{complex} spherical harmonics.
\end{df}

Let $\si = \sgh$ denote the normalized Lebesgue measure on the unit sphere. Observe that
\[
L^2(\si) = \operatornamewithlimits{\oplus}_{(p,q)\in \zz} H(p, q).
\]

Specific aspects of the harmonic analysis on $S$ are illustrated by the following
multiplication rule for the spaces $H(p, q)$: if $f \in H(p, q)$ and
$g \in H(r, s)$, then
\[ fg\in
\sum_{\ell=0}^L
H(p + r -\ell, q + s -\ell),
\]
where $L = \min(p, s) + \min(q, r)$.
See \cite[Chapter~12]{Ru80} for the proofs of the above facts and
further results about the complex spherical harmonics.

Let $M(\spn)$ denote the space of complex Borel measures on the unit sphere $\spn$.
Let $K_{p,q}(z, \za)$ denote the reproducing kernel for the Hilbert space
$H(p, q) \subset L^2(\sph)$. The polynomial
\[
\mu_{p,q}(z) = \int_S
K_{p,q} (z, \za)\, d\mu(\za), \quad z\in S,
\]
is called the $H(p, q)$-projection of $\mu\in M(S)$.
Let $\spec(\mu)$ denote the spectrum of $\mu\in M(S)$
in terms of the spherical harmonics. Namely, by definition,
\[
\spec(\mu) =
\left\{
(p, q) \in \zz:\, \mu_{p,q} \neq \mathbf{0}
\right\}.
\]

\subsection{Singular Riesz products on the sphere}

The classical Riesz product measures are defined on the unit circle $\Tbb= \{\za\in\mathbb{C}: |\za|=1\}$.
While there is no canonical generalization of the Riesz product construction to $\spn$,
useful variants of Riesz products are based on Ryll--Wojtaszczyk polynomials
(see, e.g., \cite{DouAIF})
and are motivated by Aleksandrov's abstract inner function construction \cite{Aab84};
see also \cite{Bou85}, where similar product measures
are based on a bounded orthonormal basis in the Hardy space $H^2(\spn)$.

Let $\Dbb$ denote the open unit disk of $\mathbb{C}$.
Given a lacunary sequence $\{j_k\}_{k=1}^\infty \subset \Nbb$ and
a sequence $\{a_k\}_{k=1}^\infty \subset \Dbb$,
the (standard) Riesz product $\riz(R, J, a)$ is defined by the formal equality
\begin{equation}\label{e_standard_df}
\riz(R, J, a) = \prod_{k=1}^\infty
\left(
\frac{\overline{a}_k \overline{R}_{j_k}}{2}
+ 1 +
\frac{a_k R_{j_k}}{2}
\right),
\end{equation}
where $\{R_j\}_{j=1}^\infty$ is an appropriate sequence of holomorphic homogeneous polynomials.
By definition, $(R, J, a)$ is called a Riesz triple; see Sect.~\ref{ss_clas_riz} for details.
The standard Riesz product construction is used
in \cite{DouAIF}, it serves as an intermediate step to more sophisticated pluriharmonic Riesz
product construction, where the set $J$ is constructed by induction with large lacunae.
Hence, it is worth mentioning that the index set $J$ is predefined in \eqref{e_standard_df}.

In this paper, we obtain, in particular, the following
variant of Peyri\`ere's mutual singularity criterion \cite{Pe75} for
two Riesz products on the unit sphere.

\begin{theorem}\label{t_snglr_intro}
Let $(R, J, a)$ and $(R, J, b)$ be Riesz triples on the sphere $S_n$, $n\ge 2$.
Assume that $a-b \notin \ell^2$.
Then there exists a sequence $U = \{U_j\}_{j=1}^\infty$, $U_j\in \mathcal{U}(n)$,
such that
\[
\riz(R\circ U, J, a)\ \textrm{and}\ \ \riz(R\circ U, J, b)\ \textrm{are mutually singular}.
\]
\end{theorem}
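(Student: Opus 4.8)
The plan is to mimic Peyri\`ere's classical argument, which tests mutual singularity via a single random function whose behaviour under the two product measures is governed by the second moments of the "difference" factors. First I would fix a Riesz triple structure $(R,J,a)$, $(R,J,b)$ and recall that each factor in \eqref{e_standard_df} is a perturbation of $1$ by a term of the form $\mathrm{Re}(a_k R_{j_k})$, where $R_{j_k}\in H(j_k,0)$ is a holomorphic homogeneous polynomial. The key structural fact I would exploit is that the spectra of the partial products $\prod_{k=1}^N(\cdots)$ live in disjoint "blocks" dictated by the lacunarity of $J$, so that the $H(p,q)$-projections of $\riz(R,J,a)$ are computed block-by-block; this is the sphere analog of the independence of Riesz-product factors. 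I would then single out, for each $k$, the "testing harmonic" $R_{j_k}$ (or rather the unimodular-type quantity $\overline a_k\overline R_{j_k}+a_k R_{j_k}$ versus the $b$-version) and consider its mean against $\riz(R,J,a)$ and against $\riz(R,J,b)$.

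The second step is the introduction of the unitary rotations $U=\{U_j\}$. The reason a random rotation is needed — and the conceptual point distinguishing the sphere case from the circle — is that on $S_n$ the pointwise size of $R_{j_k}$ is not constant, so the naive "$\sum |a_k-b_k|^2=\infty$ forces singularity" computation does not close; rotating $R_{j_k}$ by $U_{j_k}$ and integrating over $\mathcal U(n)$ (Haar measure) restores the needed orthogonality/variance estimates, via the fact that $\int_{\mathcal U(n)} R_{j_k}(Uz)\overline{R_{j_k}(U\za)}\,dU$ is a multiple of the reproducing kernel $K_{j_k,0}(z,\za)$. I would set up a martingale (or a Fubini-type argument over the product of Haar measures) and show that for a.e.\ choice of $U$, the partial sums $S_N(\za)=\sum_{k\le N}c_k\,\mathrm{Re}\big(\overline{a_k-b_k}\,\overline{(R_{j_k}\circ U_{j_k})}(\za)\big)$, suitably normalized by the divergent series $\sum|a_k-b_k|^2$, tend to $+\infty$ $\riz(R\circ U,J,a)$-a.e.\ and to $-\infty$ (or stay bounded/opposite sign) $\riz(R\circ U,J,b)$-a.e.; the two measures then live on disjoint Borel sets. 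Concretely this uses: (i) $\int R_{j_k}\circ U_{j_k}\,d\riz(R\circ U,J,a)$ picks out the coefficient $a_k/2$ from the $k$-th factor while all other factors integrate to their constant term $1$; (ii) a second-moment / Borel--Cantelli or law-of-large-numbers estimate controlling the fluctuations.

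The third step is to pass from "for a.e.\ $U$ the relevant series diverges" to "there exists $U$ making the two Riesz products mutually singular," which is immediate once the a.e.\ statement is phrased with respect to the product Haar measure $\bigotimes_j dU_j$ on $\prod_j \mathcal U(n)$. One subtlety I would be careful about: the Riesz products $\riz(R\circ U,J,a)$ need not be well-defined probability measures for every $(R\circ U,J,a)$ — one must check that $(R\circ U,J,a)$ is again a Riesz triple for a.e.\ $U$ (the defining size/lacunarity conditions of Sect.~\ref{ss_clas_riz} are rotation-invariant in the relevant norms), so the weak-$*$ limit defining $\riz$ exists; I would dispatch this first so that the main argument is not circular.

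\medskip
\noindent\textbf{Main obstacle.} The heart of the matter, and where I expect the real work to be, is the variance/concentration estimate: controlling $\int \big|\,\mathrm{Re}(\overline{c_k}\,\overline{(R_{j_k}\circ U_{j_k})}(\za))\,\big|^2\,d\riz(R\circ U,J,a)(\za)$ and the cross terms, uniformly enough (after averaging in $U$) to run a Kolmogorov three-series / strong-law argument. On the circle these moments are constants; on $S_n$ they are controlled only after integrating against Haar measure, where $\int_{\mathcal U(n)}|R_{j_k}(Uz)|^2\,dU = \frac{\dim H(j_k,0)}{\;?\;}$ is again constant in $z$ by homogeneity of the sphere, but the interaction of this averaging with the Riesz product measure $\riz(R\circ U,J,a)$ — which itself depends on $U$ — requires a careful Fubini argument and a uniform bound on $\|R_{j_k}\|_\infty$ built into the definition of a Riesz triple. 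Getting those two averagings (over $\za$ against $\riz$, and over $U$ against Haar) to commute cleanly, and extracting from the divergence $\sum|a_k-b_k|^2=\infty$ the conclusion that a normalized partial sum diverges $\riz(R\circ U,J,a)$-a.s., is the crux; the rest is Peyri\`ere's scheme transplanted to $S_n$.
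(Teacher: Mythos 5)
Your plan takes a genuinely different route from the paper's, and the part you yourself flag as the ``main obstacle'' is exactly the part the paper never has to confront. Your instinct to randomize over $\mathcal{U}(n)$ is correct and is in fact the paper's first step: since $\|R_{j_k}\|_{L^2(S)}\ge\delta$, the hypothesis $a-b\notin\ell^2$ gives $\sum_k|a_k-b_k|^2\|R_{j_k}\|_{L^2}^2=\infty$, and Rudin's scrambling lemma (Lemma~7.2.7 of \cite{Ru80}) produces unitaries $U_k$ with $\sum_k|a_k-b_k|^2|R_{j_k}\circ U_k(\za)|^2=\infty$ for $\si$-a.e.\ $\za$. (Your worry about $(R\circ U,J,a)$ still being a Riesz triple is a non-issue: unitaries preserve $H(j,0)$ and both the $L^\infty$- and $L^2$-norms.) But from there the paper does \emph{not} transplant Peyri\`ere's martingale/second-moment scheme to $S_n$. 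It restricts to complex lines: for fixed $\za$, the slice $\la\mapsto\riz(R\circ U,J,a)(\la\za)$ is literally a classical Riesz product on $\Tbb$ with coefficient sequence $\{a_kR_{j_k}(U_k\za)\}$, so Peyri\`ere's theorem applies verbatim on almost every slice, and the only remaining ingredient is a disintegration of the total variation $|\riz(R\circ U,J,a)-\riz(R\circ U,J,b)|$ into slice variations (Lemma~\ref{l_disint} and Proposition~\ref{p_disint_sng_two}, resting on \cite{AD20}), together with the observation that two probability measures are mutually singular iff their difference has norm $2$.

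The gap in your proposal is therefore concrete: the concentration estimate you defer is real work and is not obviously salvageable as stated. On $\Tbb$ the cross-covariances vanish because $|z^{j}|\equiv 1$; on $S_n$ one gets, for $k\ne l$, contributions of the form $\tfrac{\overline{a}_ka_l}{4}\int_S|R_{j_k}\circ U_k|^2|R_{j_l}\circ U_l|^2\,d\si$ which do not cancel against the product of the means, and controlling these for a.e.\ fixed $U$ (not merely after averaging in $U$, since the measure $\riz(R\circ U,J,a)$ itself depends on $U$) is precisely the Fubini entanglement you point to without resolving. Until that estimate is actually carried out, your argument is a plausible program rather than a proof. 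The slice decomposition is the missing idea that makes the whole difficulty disappear, because it lets you quote the one-dimensional Peyri\`ere theorem as a black box pointwise in $\za$ instead of reproving it on the sphere.
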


Also, we obtain a similar theorem about mutually singular generalized Riesz products.

\subsection{Organization of the paper} Definitions and auxiliary facts about
Riesz products are presented in Section~\ref{s_riz}.
In particular, we show that a Riesz product decomposes as the integral of its formal slice-products;
see Lemma~\ref{l_disint}. Analogous decomposition for the variation
of $\riz(R, J, a)- \riz(R, J, b)$ is obtained in Proposition~\ref{p_disint_sng_two}.
Theorem~\ref{t_snglr_intro} is proved in Section~\ref{s_zyg_sph}.
Generalized Riesz products are studied in the final Section~\ref{s_riz_G}.
In particular, we obtain a direct analog of the classical Zygmund theorem; see Corollary~\ref{c_zyg_true}.

\section{Riesz products}\label{s_riz}

\subsection{Classical Riesz products}\label{ss_clas_riz}

By definition, $(J, a)$ is an admissible pair if
$J=\{j_k\}_{k=1}^\infty$, $j_{k+1}/j_k \ge 3$, and
$a=\{a_k\}_{k=1}^\infty \subset \Dbb$.
Recall that a classical Riesz product $\mu$ based on an admissible pair $(J, a)$
is defined on the unit circle $\Tbb$ by
\[
\mu = \mu(J, a) := \prod_{k=1}^\infty \left(
\frac{\overline{a}_k \overline{z}^{j_k}}{2} + 1 +
\frac{{a}_k {z}^{j_k}}{2}
\right),
\quad z \in\Tbb,
\]
where the infinite product is understood in the weak*-sense.
Each partial product is positive, thus, the assumption $j_{k+1}/j_k \ge 3$ guarantees the convergence of the product.

Let $\meal$ denote the normalized Lebesgue measure on the unit circle $\Tbb$.
Zygmund’s theorem \cite{Zy} establishes the following dichotomy:
\begin{itemize}
  \item [(i)] if $\sum_{k=1}^\infty |a_k|^2 < \infty$, then $\mu$ is absolutely continuous with respect to $\meal$
  (in brief, $\mu\ll \meal$) and $d\mu/d\meal \in L^2(\Tbb)$;
  \item [(ii)] if $\sum_{k=1}^\infty |a_k|^2 = \infty$, then $\mu$ is singular with respect to $\meal$
  (in brief, $\mu\bot\meal$).
\end{itemize}

Clearly, part (ii) of Zygmund's dichotomy is a particular case of the following
result about mutual singularity of Riesz products.

\begin{proposition}[J.~Peyri\`ere \cite{Pe75}]\label{p_pey}
Let $(J,a)$ and $(J, b)$ be admissible pairs.
Assume that $a-b\notin \ell^2$. Then
\[
\mu(J, a) \bot (J, b).
\]
\end{proposition}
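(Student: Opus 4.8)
The plan is to prove the stronger statement that $\mu:=\mu(J,a)$ and $\nu:=\mu(J,b)$ are carried by disjoint Borel sets. Put $c_k=a_k-b_k$, $\delta_k=|c_k|$, and $p_k(z)=1+\mathrm{Re}(\overline{a}_k\overline{z}^{j_k})$, $q_k(z)=1+\mathrm{Re}(\overline{b}_k\overline{z}^{j_k})$, so that $\mu$ and $\nu$ are the weak$^*$ limits of the positive unit-mass measures $\prod_{k\le N}p_k\,d\meal$ and $\prod_{k\le N}q_k\,d\meal$. The mechanism will be the mutual orthogonality, in $L^2(\mu)$ and in $L^2(\nu)$, of the centred lacunary atoms
\[
Y_k(z)=\mathrm{Re}\bigl(\overline{c_k}\,\overline{z}^{j_k}\bigr)=\tfrac12\bigl(\overline{c_k}\overline{z}^{j_k}+c_kz^{j_k}\bigr),\qquad \|Y_k\|_\infty=\delta_k\le 2,
\]
followed by a Borel--Cantelli argument along a fast subsequence of indices.

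First I would record the consequences of $j_{k+1}/j_k\ge 3$. The map $(\varepsilon_k)\mapsto\sum_k\varepsilon_k j_k$ is injective on finitely supported $\{-1,0,1\}$-sequences, so each of $j_k$, $2j_k$ and $j_k\pm j_l$ ($k\ne l$) has at most one such representation; reading off Fourier coefficients of a sufficiently long partial product gives $\widehat\mu(j_k)=a_k/2$, and, for $k>l$, $\widehat\mu(j_k+j_l)=a_ka_l/4$ and $\widehat\mu(j_k-j_l)=a_k\overline{a}_l/4$, and likewise for $\nu$ with $b$. A short computation then yields, with $\alpha_k:=\int Y_k\,d\mu$ and $\beta_k:=\int Y_k\,d\nu$,
\[
\alpha_k=\tfrac12\mathrm{Re}(\overline{c_k}a_k),\qquad \beta_k=\tfrac12\mathrm{Re}(\overline{c_k}b_k),\qquad \alpha_k-\beta_k=\tfrac12|c_k|^2=\tfrac12\delta_k^2,
\]
and, for $k\ne l$, the four terms of $\int Y_kY_l\,d\mu$ collapse to a product of binomials equal to $\tfrac14\mathrm{Re}(\overline{c_k}a_k)\,\mathrm{Re}(\overline{c_l}a_l)=\alpha_k\alpha_l$, i.e. $\mathrm{Cov}_\mu(Y_k,Y_l)=0$; the same holds under $\nu$. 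Since $Y_k^2\le\delta_k^2$ pointwise, the functions $Y_k-\alpha_k$ are orthogonal in $L^2(\mu)$ with $\int(Y_k-\alpha_k)^2\,d\mu\le\delta_k^2$, so
\[
\int_\Tbb\Bigl(\sum_{k=1}^N(Y_k-\alpha_k)\Bigr)^2 d\mu=\sum_{k=1}^N\int(Y_k-\alpha_k)^2\,d\mu\le\sigma_N:=\sum_{k=1}^N\delta_k^2 ,
\]
and identically with $\beta_k,\nu$ in place of $\alpha_k,\mu$. The hypothesis $a-b\notin\ell^2$ says precisely that $\sigma_N\to\infty$.

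To finish, for each $i$ fix $N_i$ with $\sigma_{N_i}\ge i^4$ (possible since $\sigma_N$ increases to $\infty$), with $N_1<N_2<\cdots$. By Chebyshev's inequality and the last display,
\[
\mu\Bigl(\Bigl|\tfrac1{\sigma_{N_i}}\sum_{k\le N_i}(Y_k-\alpha_k)\Bigr|>\tfrac1i\Bigr)\le\frac{i^2}{\sigma_{N_i}}\le\frac1{i^2},
\]
so the right-hand sides sum in $i$ and, by Borel--Cantelli, $\tfrac1{\sigma_{N_i}}\sum_{k\le N_i}(Y_k-\alpha_k)\to 0$ $\mu$-a.e.; along the same sequence $\tfrac1{\sigma_{N_i}}\sum_{k\le N_i}(Y_k-\beta_k)\to 0$ $\nu$-a.e. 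Set $W_i(z)=\tfrac1{\sigma_{N_i}}\sum_{k\le N_i}\bigl(Y_k(z)-\alpha_k\bigr)$. Then $W_i\to 0$ $\mu$-a.e., whereas $\nu$-a.e.
\[
W_i=\frac1{\sigma_{N_i}}\sum_{k\le N_i}(Y_k-\beta_k)-\frac1{\sigma_{N_i}}\sum_{k\le N_i}(\alpha_k-\beta_k)=o(1)-\tfrac12\ \longrightarrow\ -\tfrac12 .
\]
Consequently $E:=\{z\in\Tbb:\ \liminf_i W_i(z)>-\tfrac14\}$ is a Borel set with $\mu(E)=1$ and $\nu(E)=0$, whence $\mu\perp\nu$, which is the assertion of the proposition; the case $b=0$ recovers part (ii) of Zygmund's dichotomy.

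I expect the only nonroutine step to be the vanishing of $\mathrm{Cov}_\mu(Y_k,Y_l)$ for $k\ne l$: it rests on the \emph{product form} $\widehat\mu(j_k+j_l)=a_ka_l/4$, $\widehat\mu(j_k-j_l)=a_k\overline{a}_l/4$, hence on the injectivity of $(\varepsilon_k)\mapsto\sum_k\varepsilon_kj_k$ furnished by the admissibility condition $j_{k+1}/j_k\ge 3$ (with ratio merely $\ge 2$ one would face collisions such as $j_k-j_l=j_m$ and the correlations would no longer cancel). The remaining ingredients --- weak$^*$ convergence of the infinite products, and the equality of a Fourier coefficient of $\mu$ with that of a long enough partial product --- are standard consequences of the lacunarity, and a more robust route (strengthening the second-moment bound to a genuine strong law, or invoking a Kakutani-type dichotomy) is available but not needed here.
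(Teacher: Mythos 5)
Your argument is correct and complete. The paper itself gives no proof of this proposition --- it is quoted from Peyri\`ere \cite{Pe75} and used as a black box (both directly and, via \cite[Theorem~3.3]{Du03}, in the generalized setting) --- so there is nothing in the text to compare against except the citation; what you have written is essentially Peyri\`ere's own second-moment argument. The two points on which the whole proof hinges are both handled correctly: (1) the injectivity of $(\varepsilon_k)\mapsto\sum_k\varepsilon_kj_k$ on finitely supported $\{-1,0,1\}$-sequences, which follows from $2\sum_{l<m}j_l<3j_{m-1}\le j_m$ and which gives the \emph{product form} of $\widehat\mu$ at the frequencies $\pm j_k\pm j_l$, hence $\mathrm{Cov}_\mu(Y_k,Y_l)=0$ for $k\ne l$ (your computation $\int Y_kY_l\,d\mu=\tfrac1{16}(\overline{c_k}a_k+c_k\overline a_k)(\overline{c_l}a_l+c_l\overline a_l)=\alpha_k\alpha_l$ checks out); and (2) the identity $\alpha_k-\beta_k=\tfrac12|c_k|^2$, which turns $a-b\notin\ell^2$ into the divergence $\sum(\alpha_k-\beta_k)=\tfrac12\sigma_N\to\infty$ that separates the two almost-sure limits $0$ and $-\tfrac12$. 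The remaining steps (stabilization of $\widehat\mu(m)$ along partial products, the bound $\int(Y_k-\alpha_k)^2\,d\mu\le\delta_k^2$, Chebyshev plus Borel--Cantelli along $N_i$ with $\sigma_{N_i}\ge i^4$) are routine and correctly executed; you even prove the stronger conclusion that $\mu$ and $\nu$ live on disjoint Borel sets. One cosmetic remark: the displayed conclusion in the statement, $\mu(J,a)\bot(J,b)$, is a typo for $\mu(J,a)\bot\mu(J,b)$, and that is the statement you prove.
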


The main results of the present paper extend the above singularity criterion
to the standard and generalized Riesz products on the unit sphere.

\subsection{Riesz products on the sphere}\label{ss_sph_riz}
Part (i) of Zygmund's dichotomy indicates that non-trivial (singular) examples
of analogs of Riesz products on the sphere could be based
on the homogeneous holomorphic polynomials introduced by
Ryll and Wojtaszczyk \cite{RW83}
or on holomorphic polynomials with similar properties;
see, e.g., \cite{DouAIF} and \cite{Du03}.

We say that $\{R_j\}_{j=1}^\infty$
 is a Ryll--Wojtaszczyk sequence (in brief, RW-sequence) with a constant $\delta \in (0, 1)$ if
\begin{itemize}
\item $R_j \in H(j, 0)$, i.e., $R_j$ is a homogeneous holomorphic polynomial of degree~$j$,
\item $\|R_j\|_{L^\infty(\sph)} = 1$,
\item $\|R_j\|_{L^2(S)} \ge \delta$ for all $j =1,2,\dots$.
\end{itemize}

\begin{df}\label{d_Rpair}
Let $R = \{R_j\}_{j=1}^\infty$ be a Ryll--Wojtaszczyk sequence,
$J = \{j_k\}_{k=1}^\infty\subset \Nbb$, $j_{k+1}/j_k \ge 3$,
and $a = \{a_k\}_{k=1}^\infty \subset \Dbb$.
Then $(R,J,a)$ is called a Riesz triple.
\end{df}

Each Riesz triple generates a (standard) Riesz product (see \cite{DouAIF}).
Namely, the standard Riesz product
$\riz(R, J, a)$ is defined by the formal equality
\[
\riz(R, J, a) = \prod_{k=1}^\infty
\left(
\frac{\overline{a}_k \overline{R}_{j_k}}{2}
+ 1 +
\frac{a_k R_{j_k}}{2}
\right).
\]
To prove the convergence of the above product, consider the partial products
\begin{equation}\label{e_Riz_dPartial}
\riz_\varkappa(R, J, a) := \prod_{k=1}^\varkappa
\left(
\frac{\overline{a}_k \overline{R}_{j_k}}{2}
+ 1 +
\frac{a_k R_{j_k}}{2}
\right).
\end{equation}
Fix a polynomial $\mathcal{P}$ on $\sph$.
Since $j_{k+1}/j_k \ge 3$, we have
\[
\spec (\riz_{\varkappa+\ell} - \riz_\varkappa) \cap \spec \mathcal{P} = \varnothing\quad\textrm{for all}\ \ell \in \Nbb
\]
provided that $\varkappa$ is sufficiently large.
Also, we have $\riz_\varkappa \ge 0$ and $\|\riz_\varkappa\|_{L^1(\sph)} = 1$.
Therefore, the partial products
$\riz_\varkappa(R, J, a)$
converge weakly* as $\varkappa\to \infty$ to a probability measure.
So use the above symbol $\riz(R, J, a)$ for this limit.

Now, fix a $\za\in \sph$.
Clearly, the slice product
\[
\riz_\za(R, J, a)(\la) := \riz(R(\la\za), J, a),\quad \la\in \Tbb,
\]
is the classical Riesz
product based on the admissible pair
$\left(\{a_k R_{j_k} (\za)\}_{k=1}^\infty, \{j_k\}_{k=1}^\infty \right)$.
In particular, $\riz_\za(R, J, a)$ is a correctly defined probability measure on the unit circle $\Tbb$.
This observation often reduces a problem about standard Riesz products
to a question about classical Riesz products on the unit circle.
In fact, the main technical idea is
to decompose a standard Riesz product as the integral of its slice-products.

\subsection{Slices and decompositions}

\begin{lem}\label{l_disint}
Let $(R, J, a)$ be a Riesz triple on the sphere $S_n$, $n\ge 2$,
and let $\riz(R, J, a)$ denote the corresponding Riesz product. Then
\begin{equation}\label{e_disint_main}
 \riz(R, J, a) = \int_{\cpn} \riz_\za(R, J, a)\, d\sgh(\za)
\end{equation}
in the following weak sense:
\begin{equation}\label{e_disint_CS}
 \int_{S_n} f\, d \riz(R, J, a) = \int_{\cpn} \int_{\Tbb} f\, d\riz_\za(R, J, a)\, d\sgh(\za)
\end{equation}
for all $f\in C(S_n)$.
\end{lem}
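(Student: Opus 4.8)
The plan is to prove the identity by approximating with partial products and passing to the limit. First I would fix $f \in C(S_n)$ and work with the partial products $\riz_\varkappa(R,J,a)$ from \eqref{e_Riz_dPartial}. Since each $\riz_\varkappa$ is a nonnegative polynomial on the sphere, the measure $\riz_\varkappa(R,J,a)\,d\sgh$ disintegrates over slices by the standard polar-type formula on $S_n$: for any $g \in L^1(\sgh)$,
\[
\int_{S_n} g\, d\sgh = \int_{S_n} \int_{\Tbb} g(\la\za)\, d\meal(\la)\, d\sgh(\za).
\]
Applying this with $g = f \cdot \riz_\varkappa(R,J,a)$ and observing that $\la \mapsto \riz_\varkappa(R(\la\za),J,a)$ is exactly the $\varkappa$-th partial product of the classical Riesz product $\riz_\za(R,J,a)$ on $\Tbb$, I obtain
\[
\int_{S_n} f\, d\bigl(\riz_\varkappa(R,J,a)\,d\sgh\bigr) = \int_{S_n} \int_{\Tbb} f(\la\za)\, d\bigl(\riz_{\varkappa,\za}(R,J,a)\bigr)(\la)\, d\sgh(\za),
\]
where $\riz_{\varkappa,\za}$ denotes the $\varkappa$-th partial product on the circle. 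This is the desired identity at the level of partial products.

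Next I would let $\varkappa \to \infty$ on both sides. On the left, the weak* convergence $\riz_\varkappa(R,J,a)\,d\sgh \to \riz(R,J,a)$ established in Section~\ref{ss_sph_riz} gives convergence of $\int_{S_n} f\, d\bigl(\riz_\varkappa\,d\sgh\bigr)$ to $\int_{S_n} f\, d\riz(R,J,a)$. On the right, for each fixed $\za$ the classical partial products $\riz_{\varkappa,\za}(R,J,a)$ converge weakly* on $\Tbb$ to $\riz_\za(R,J,a)$ (this is the convergence of the classical Riesz product recalled in Sect.~\ref{ss_clas_riz}, valid because $\{a_k R_{j_k}(\za)\}$ together with $J$ forms an admissible pair), so the inner integral $\int_\Tbb f(\la\za)\, d\riz_{\varkappa,\za}(\la)$ converges pointwise in $\za$ to $\int_\Tbb f(\la\za)\, d\riz_\za(R,J,a)(\la)$. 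To interchange the limit with the outer integral over $S_n$ I would invoke dominated convergence: each $\riz_{\varkappa,\za}$ is a probability measure, so $\bigl|\int_\Tbb f(\la\za)\, d\riz_{\varkappa,\za}(\la)\bigr| \le \|f\|_{L^\infty(S_n)}$ uniformly in $\varkappa$ and $\za$, providing the required integrable majorant.

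The main point requiring care is the measurability of the map $\za \mapsto \int_\Tbb f(\la\za)\, d\riz_\za(R,J,a)(\la)$, so that the right-hand side of \eqref{e_disint_CS} is meaningful and the dominated convergence theorem applies; but this follows at once from the pointwise convergence just described, since each partial-product integrand $\za \mapsto \int_\Tbb f(\la\za)\, d\riz_{\varkappa,\za}(\la)$ is continuous in $\za$ (the coefficients $a_k R_{j_k}(\za)$ depend continuously on $\za$ and the partial product is a polynomial expression in them), and a pointwise limit of continuous functions is Borel. I do not expect any serious obstacle here; the only subtlety is bookkeeping the identification of the slice of a partial product with the partial product of the slice, which is immediate from the definitions since restricting the polynomial identity \eqref{e_Riz_dPartial} to the circle $\{\la\za : \la\in\Tbb\}$ commutes with taking finite products.
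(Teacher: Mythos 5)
Your proposal is correct and follows essentially the same route as the paper: slice the partial products via the integration-by-slices formula, identify the slice of a partial product with the partial product of the slice, and pass to the weak* limit on both sides. The paper states the convergence of the right-hand side without spelling out the dominated-convergence and measurability details that you supply, but the argument is the same.
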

\begin{proof}
Let $\riz_\varkappa:= \riz_\varkappa(R, J, a)$ be defined by \eqref{e_Riz_dPartial}.
Since $\riz_\varkappa \in C(\spn)$, the integration by slices formula (see, e.g., \cite{Ru80}) guarantees that
\begin{equation}\label{e_riz_r_decomp}
\int_{S_n} f\, d\riz_\varkappa = \int_{\cpn} \int_{\Tbb} f\, d(\riz_\varkappa)_\za \, d\sgh(\za),\quad  f\in C(\spn),
\end{equation}
where $(\riz_\varkappa)_\za$ is considered as a measure on $\Tbb$.

By the hypothesis, the Riesz product $\riz(R, J, a)$ is correctly defined, hence,
\begin{equation}\label{e_LH_riz}
  \riz_\varkappa \overset{w^\ast}\longrightarrow \riz(R, J, a).
\end{equation}
Next, $(\riz_\varkappa)_\za =(\riz_\za)_\varkappa$ is a partial product of the classical Riesz product $\riz_\za$,
in particular, $(\riz_\za)_\varkappa \overset{w^\ast}\longrightarrow \riz_\za$.
Therefore,
\begin{equation}\label{e_RH_slices}
\int_{\cpn} \int_{\Tbb} f\, d(\riz_\za)_\varkappa\, d\sgh(\za) \to
\int_{\cpn} \int_{\Tbb} f\, d\riz_\za \, d\sgh(\za)\quad \textrm{as}\ r\to\infty.
\end{equation}
Combining properties \eqref{e_riz_r_decomp}, \eqref{e_LH_riz} and \eqref{e_RH_slices},
we obtain \eqref{e_disint_CS}, as required.
\end{proof}

\begin{proposition}\label{p_disint_sng_two}
Let $(R, J, a)$ and $(R, J, b)$  be Riesz triples on the sphere $S_n$, $n\ge 2$,
and let $\riz(R, J, a)$, $\riz(R, J, b)$ denote the corresponding Riesz products.
Assume that
\[
\riz_\za(R, J, a) \bot \riz_\za(R, J, b)
\]
for $\sgh$-almost all $\za\in\cpn$.
Then
\[
\riz(R, J, a)\bot\riz(R, J, b).
\]
\end{proposition}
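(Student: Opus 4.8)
The plan is to exploit the disintegration from Lemma~\ref{l_disint} together with a slice-wise version of the classical fact that, on the circle, a Riesz product is either absolutely continuous or singular with respect to Lebesgue measure. First I would write $\Pi_a := \riz(R,J,a)$ and $\Pi_b := \riz(R,J,b)$, and recall that the slice products $\riz_\za(R,J,a)$ and $\riz_\za(R,J,b)$ are classical Riesz products on $\Tbb$. By hypothesis, for $\sgh$-a.e.\ $\za$ there is a Borel set $E_\za \subset \Tbb$ with $\riz_\za(R,J,a)(E_\za) = 1$ and $\riz_\za(R,J,b)(E_\za) = 0$.

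The key step is to assemble these slice-wise carrier sets into a single Borel set in $S_n$. I would use the polar-coordinate (slice) parametrisation $\Tbb\times\cpn \to S_n$, $(\la,\za)\mapsto \la\za$ — more precisely, the integration-by-slices machinery that already underlies Lemma~\ref{l_disint} — to define
\[
E := \{\la\za :\ \za\in\cpn,\ \la\in E_\za\} \subset S_n,
\]
after first arranging, by a measurable-selection argument, that the assignment $\za\mapsto E_\za$ can be taken jointly measurable (so that $E$ is a Borel, or at least universally measurable, subset of $S_n$; passing to a Borel set of full $\Pi_a$-measure inside it if necessary). Then I would apply the weak decomposition \eqref{e_disint_main}--\eqref{e_disint_CS}, extended from $f\in C(S_n)$ to bounded Borel $f$ (standard, since both sides are regular Borel measures and the identity persists under monotone/bounded convergence), to the indicator $f = \mathbf{1}_E$. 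This gives
\[
\Pi_a(E) = \int_{\cpn}\riz_\za(R,J,a)(E_\za)\,d\sgh(\za) = \int_{\cpn} 1\,d\sgh(\za) = 1,
\]
and likewise $\Pi_b(E) = \int_{\cpn}\riz_\za(R,J,b)(E_\za)\,d\sgh(\za) = 0$. Hence $E$ carries $\Pi_a$ and is $\Pi_b$-null, so $\Pi_a\bot\Pi_b$.

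The main obstacle is the measurability of the slice-carrier selection: \emph{a priori} one only knows that for each fixed $\za$ some separating set exists, and one must produce a version depending measurably on $\za$ so that $E$ is measurable and the disintegration identity applies to $\mathbf{1}_E$. I would handle this by choosing a canonical carrier — for instance, using the Lebesgue decomposition of $\riz_\za(R,J,b)$ against $m$ on $\Tbb$ and the fact that $\riz_\za(R,J,a)\bot m$ for a.e.\ $\za$ (which, by Zygmund's dichotomy applied slice-wise, is automatic once $a\neq 0$, but in any case follows from the stated mutual singularity together with standard structure of Riesz products), so that one may take $E_\za$ to be the singular-support set of $\riz_\za(R,J,a)$; this set is obtained by an explicit limiting construction (e.g.\ via the Radon--Nikodym derivatives of the partial products) that is measurable in $\za$. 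A cleaner alternative, avoiding selection entirely, is to argue by contradiction: if $\Pi_a\not\bot\Pi_b$, decompose $\Pi_a = \Pi_a^{ac} + \Pi_a^{s}$ with respect to $\Pi_b$, note $\Pi_a^{ac}\neq 0$, disintegrate \emph{this} inequality via \eqref{e_disint_main} to find a positive-$\sgh$-measure set of $\za$ for which $\riz_\za(R,J,a)$ has a non-null part absolutely continuous with respect to $\riz_\za(R,J,b)$, contradicting the hypothesis. I would develop whichever of these two routes integrates most smoothly with the measure-theoretic conventions already fixed in Section~\ref{s_riz}.
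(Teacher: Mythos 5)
Your route is genuinely different from the paper's. The paper never constructs carrier sets at all: it observes that $\int_{\cpn}\|\riz_\za(R,J,a)-\riz_\za(R,J,b)\|\,d\sgh<\infty$, so that $\riz(R,J,a)-\riz(R,J,b)$ is a \emph{decomposable measure} in the sense of \cite[Definition~2]{AD20}, invokes \cite[Theorem~2.6]{AD20} to get
\[
|\riz(R,J,a)-\riz(R,J,b)|=\int_{\cpn}|\riz_\za(R,J,a)-\riz_\za(R,J,b)|\,d\sgh(\za),
\]
and then uses the elementary fact that two probability measures $\rho_1,\rho_2$ satisfy $\rho_1\bot\rho_2$ if and only if $\|\rho_1-\rho_2\|=2$. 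Since each slice difference has norm $2$ by hypothesis, the total variation of the global difference is $2$, and singularity follows. This buys a very short proof with no measurable-selection issues, at the price of importing the decomposable-measure machinery; your construction is more self-contained and makes the geometric mechanism visible.

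Two caveats on your plan. First, the bare principle ``fibrewise singularity of an integral representation implies singularity of the integrals'' is false in general: on $\Tbb$ one has $m=\int_\Tbb \delta_\za\,dm(\za)=\int_\Tbb \delta_{-\za}\,dm(\za)$ with $\delta_\za\bot\delta_{-\za}$ for every $\za$. What saves your argument here is that the slice measures $\riz_\za(R,J,a)$ and $\riz_\za(R,J,b)$ live on the \emph{same} circle $C_\za=\{\la\za:\la\in\Tbb\}$, and these circles partition $S_n$; this disjointness is exactly what \cite[Theorem~2.6]{AD20} encapsulates, and your write-up should state it explicitly rather than rely on the disintegration formula alone. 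Second, the parametrisation of the circles is $\Tbb$-redundant ($C_\za=C_{e^{i\theta}\za}$, with $\riz_{e^{i\theta}\za}$ a rotate of $\riz_\za$), so for the identity $(\mathbf{1}_E)_\za=\mathbf{1}_{E_\za}$ used in your displayed computation you must choose the carriers consistently along each $\Tbb$-orbit — otherwise $\{\la:\la\za\in E\}$ is an uncountable union of rotates of the various $E_{e^{i\theta}\za}$ and need not be $\riz_\za(R,J,b)$-null. Your ``canonical carrier'' (singular support built from Radon--Nikodym derivatives of the partial products) does achieve this consistency, and together with the standard monotone-class extension of \eqref{e_disint_CS} to bounded Borel $f$ it closes the argument; but these points, plus the measurability of $\za\mapsto E_\za$ that you already flagged, are precisely the work the paper's citation of \cite{AD20} replaces.
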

\begin{proof}
Since $\riz_\za(R, J, a)$ and $\riz_\za(R, J, b)$, $\za\in\cpn$, are probability measures,
we clearly have
\[
\int_{\cpn} \|\riz_\za(R, J, a) - \riz_\za(R, J, b)\| \,d\sgh(\za) < \infty.
\]
The above property and the decomposition formula \eqref{e_disint_main} for
the products $\riz(R, J, a)$ and $\riz(R, J, b)$
guarantee that $\riz(R, J, a) - \riz(R, J, b)$ is a decomposable measure in the sense of \cite[Definition~2]{AD20}.
Hence, by Theorem~2.6 from \cite{AD20},
\begin{equation}\label{e_var2_slices}
|\riz(R, J, a) - \riz(R, J, b)| = \int_{\cpn} |\riz_\za (R, J, a) - \riz_\za (R, J, a)|\, d\sgh(\za)
\end{equation}
in the weak sense.
Next, given two probability measures $\rho_1$ and $\rho_2$ on $\spn$, observe that
$\rho_1 \bot \rho_2$ if and only if $\|\rho_1 - \rho_2\|=2$.
Thus, property \eqref{e_var2_slices} and the hypothesis of the proposition
guarantee that $\riz(R, J, a)\bot\riz(R, J, b)$, as required.
\end{proof}

\section{Mutually singular Riesz products on the unit sphere}\label{s_zyg_sph}

Let $U = \{U_j\}_{j=1}^\infty$ be a sequence of unitary operators
on $\cn$ and let $R = \{R_j\}_{j=1}^\infty$ be a sequence of polynomials. 
By definition, put $R\circ U = \{R_j \circ U_j\}_{j=1}^\infty$.

\begin{proof}[Proof of Theorem~\ref{t_snglr_intro}]
Since $R$ is an RW-sequence and $a - b\notin \ell^2$, we have
\[
\sum_{k=1}^\infty |a_k - b_k|^2 \| R_{j_k}\|_{L^2(S)}^2 = \infty.
\]
Thus, by the scrambling lemma (see, e.g., \cite[Lemma~7.2.7]{Ru80}),
there exist unitary operators $U_k\in \mathcal{U}(n)$, $k=1, 2, \dots$, such that
\begin{equation}\label{e_Uslice_infty}
\sum_{k=1}^\infty |a_k - b_k|^2 |R_{j_k}\circ U_k (\za)|^2 = \infty.
\end{equation}
for $\si$-almost all $\za\in S$.
Fix a $\za\in S$ such that \eqref{e_Uslice_infty} holds,
put $\al_{k, \za} = a_k R_{j_k}\circ U_k (\za)$, $\beta_{k, \za} = b_k R_{j_k}\circ U_k (\za)$, $k=1,2,\dots$, $\al_\za=\{\al_{k, \za}\}_{k=1}^\infty$ and $\beta_\za=\{\beta_{k, \za}\}_{k=1}^\infty$.
Consider the classical Riesz products $\mu(J, \al_\za)$ and $\mu(J, \beta_\za)$.
Rewriting \eqref{e_Uslice_infty} as $\al_\za - \beta_\za\notin\ell^2$, we obtain
\begin{equation}\label{e_Uslice_snglr}
\mu(J, \al_\za) \bot \mu(J, b_\za).
\end{equation}
by Proposition~\ref{p_pey}.
Now, observe that $\mu(J, \al_\za)$ and $\mu(J, \beta_\za)$ coincide with the slice-products
$\riz_\za(R_{j_k}\circ U_k, J, a_k)$ and $\riz_\za(R_{j_k}\circ U_k, J, b_k)$, respectively.
Therefore, by decomposition formula,
\[
\begin{split}
\riz(R_{j_k}\circ U_k, J, a_k)
&= \int_{\cpn} \mu(J, \al_\za)\, d\sgh(\za), \\
\riz(R_{j_k}\circ U_k, J, b_k)
&= \int_{\cpn} \mu(J, \beta_\za)\, d\sgh(\za).
\end{split}
\]
Hence, by \eqref{e_Uslice_snglr} and Proposition~\ref{p_disint_sng_two},
\[
\riz(R\circ U, J, a)\bot\si,
\]
as required.
\end{proof}

Putting $b=\mathbf{0}$ in Theorem~\ref{t_snglr_intro}, we obtain the following analog of part~(ii)
of Zygmund's dichotomy.

\begin{cor}\label{c_zyg_sph}
Let $(R, J, a)$ be a Riesz triple on the sphere $S_n$, $n\ge 2$.
Assume that $a \notin \ell^2$.
Then there exists a sequence $U = \{U_j\}_{j=1}^\infty$, $U_j\in \mathcal{U}(n)$,
such that
\[
\riz(R\circ U, J, a)\bot \si.
\]
\end{cor}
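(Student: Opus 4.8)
The plan is to deduce the corollary directly from Theorem~\ref{t_snglr_intro} by specializing $b=\mathbf{0}$. The one preliminary point to check is that $(R,J,\mathbf{0})$ is a genuine Riesz triple: since $0\in\Dbb$, the constant sequence $\mathbf{0}=\{0\}_{k=1}^\infty$ lies in $\Dbb$, so $(R,J,\mathbf{0})$ satisfies Definition~\ref{d_Rpair}. With this choice we have $a-b=a\notin\ell^2$ by hypothesis, so Theorem~\ref{t_snglr_intro} applies and produces a sequence $U=\{U_j\}_{j=1}^\infty$, $U_j\in\mathcal{U}(n)$, such that
\[
\riz(R\circ U, J, a)\ \bot\ \riz(R\circ U, J, \mathbf{0}).
\]

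It then remains only to identify $\riz(R\circ U, J, \mathbf{0})$ with $\si$. When $b_k=0$ for every $k$, each factor in the infinite product defining $\riz(R\circ U, J, \mathbf{0})$ is identically equal to $1$, so every partial product \eqref{e_Riz_dPartial} (formed with $R\circ U$ in place of $R$) is the constant function $1$; hence the weak* limit defining the Riesz product is the probability measure whose density with respect to $\si$ equals $1$, that is, $\si$ itself. Substituting this into the displayed mutual singularity relation gives $\riz(R\circ U, J, a)\bot\si$, as required.

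No real obstacle arises here: the corollary is a straightforward specialization of Theorem~\ref{t_snglr_intro}, and the only step worth spelling out is the trivial evaluation of the $b=\mathbf{0}$ Riesz product as the base measure $\si$.
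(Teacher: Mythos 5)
Your proposal is correct and matches the paper exactly: the paper obtains this corollary precisely by setting $b=\mathbf{0}$ in Theorem~\ref{t_snglr_intro}, and your additional remarks (that $\mathbf{0}\subset\Dbb$ makes $(R,J,\mathbf{0})$ a valid Riesz triple, and that the resulting Riesz product is $\si$ since every factor equals $1$) simply spell out the details the paper leaves implicit.
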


\begin{rem}
Let $(R, J, a)$ be a Riesz triple on the sphere $S_n$, $n\ge 2$.
Assume that $a \in \ell^2$. Applying standard estimates
(see, e.g., \cite[Theorem~4.1(i)]{DouAIF}),
we conclude that  $\riz(R, J, a)\ll\si$.
In other words, part~(i) of Zygmund's dichotomy
extends to the standard Riesz products.
\end{rem}

\begin{rem}\label{r_U_isness}
Since the definition of an RW-sequence restricts only the $L^2$-norms
of the polynomials $R_j$, $j=1,2, \dots$,
it is not difficult to give an example of an RW-sequence $R$ such that
an auxiliary sequence $U$ is necessary in Corollary~\ref{c_zyg_sph}, that is,
the property $\riz(R, J, a)\bot\si$ does not hold.
\end{rem}

\section{Mutually singular generalized Riesz products}\label{s_riz_G}

\subsection{Generalized Riesz pairs}
This section is motivated, in particular, by Remark~\ref{r_U_isness}:
it is desirable to change the standard Riesz product construction
in such a way that a direct analog of part (ii) of Zygmund's dichotomy
holds for the corresponding Riesz type products.
To realize this program, we replace the RW-polynomials by certain non-homogeneous
holomorphic polynomials with special properties.
In the setting of the unit circle, the corresponding Riesz type product measure
is called a generalized Riesz product.
To obtain analogs of generalized Riesz products on the sphere, we use the following
holomorphic polynomials with arbitrarily large lacunae in spectrum.

\begin{df}\label{d_RW_Li}
Assume that, for all $j, L \in \Nbb$, we are given
$D(j, L)\in \Nbb$ and homogeneous holomorphic polynomials
$ W_\varkappa = W_\varkappa(j, L)$, $\varkappa = 1, 2, . . . ,D(j, L)$, such that
\begin{itemize}
  \item $\dg W_\varkappa \ge j$;
  \item $|\dg W_{\varkappa_1} - \dg W_{\varkappa_2}| \ge L$ (lacunaes in spectrum);
  \item $\left|\sum_\varkappa W_\varkappa(\za)\right| \le 1$ for all $\za\in S$;
  \item $\sum_\varkappa |W_\varkappa(\za)| \ge \de$ for all $\za\in\sph$ and a universal constant $\de > 0$.
\end{itemize}
Then put
\[
R(j, L) = \sum_\varkappa W_\varkappa(j, L), \quad j, L\in \Nbb,
\]
and define 
\[
R = \{R(j, L)\}_{j,L}.
\]
Also, fix a coefficient sequence $a = \{a_k\}_{k=1}^\infty \subset \Dbb$.
By definition, $(R, a)$ is called a generalized Riesz pair.
\end{df}

It is worth mentioning that the pairs from the above definition do exist, various examples and 
further applications in the pluriharmonic setting are given in \cite{Du03},
where such pairs are called $L^\infty$-generalized.

\subsection{Generalized Riesz products}\label{ss_GRP}
Fix a generalized Riesz pair $(R, a)$.
The index set $J = \{j_k\}_{k=1}^\infty$ is constructed by induction.

Firstly, fix a $j_1 \in \Nbb$ and define 
\[
\riz_1 = 1 + \mathrm{Re}\, [a_1 R(j_1, 1)].
\]

Step $k + 1$. By the induction hypothesis, we are given a polynomial $\riz_k$.
Put $L_{k+1} = 2 \dg \riz_k + 2$ and define
\[
\riz_{k+1} = \riz_k\cdot (1 + \mathrm{Re}\, [a_{k+1} R(j_{k+1}, L_{k+1})]).
\]
The index $j_{k+1}$ is selected so large that the set
$\spec(\riz_{k+1} - \riz_k)$  does not intersect a square 
$[0,M]^2 \subset \zz$ such that $[0,M]^2 \supset \spec(\riz_k)$.

The above properties guarantee that $\riz_k$ weakly* converge in $M(\sph)$.
The limit probability measure $\riz = \riz(R, J, a)$ is called
a generalized Riesz product.

\subsection{Generalized Riesz products on the unit circle}
Definition~\ref{d_RW_Li} is applicable to polynomials on $\Tbb$.
Observe that the corresponding polynomials $W_\varkappa = W_\varkappa(j, L)$ are holomorphic monomials.
In this setting,
the construction described in Subsection~\ref{ss_GRP} is that of a usual generalized Riesz product.

\begin{proposition}\label{p_gnrlz_1}
Suppose that $\riz(R, J, a)$ and $\riz(R, J, b)$ are correctly defined gene\-ra\-lized Riesz products 
on $\Tbb$.
Assume that $a-b\notin \ell^2$. Then
\[
\riz(R, J, a) \bot \riz(R, J, b).
\]
\end{proposition}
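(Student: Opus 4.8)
\textbf{Proof plan for Proposition~\ref{p_gnrlz_1}.}
The plan is to reduce the statement to the classical Peyri\`ere criterion (Proposition~\ref{p_pey}) by passing through martingale differences, exactly as in the circle case, but keeping track of the fact that each factor $1 + \mathrm{Re}\,[a_k R(j_k, L_k)]$ is no longer a single Fourier term but a sum of monomials $W_\varkappa(j_k, L_k)$ with spectral gaps of size $L_k = 2\deg\riz_{k-1}+2$. First I would fix the mutually absolutely continuous reference we compare against: set $\riz = \riz(R,J,a)$ and $\nu = \riz(R,J,b)$, and consider the Radon--Nikodym-type ratio on the finite $\sigma$-algebras $\mathcal{F}_k$ generated by the spectrum of $\riz_k$. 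Because $j_{k+1}$ is chosen so that $\spec(\riz_{k+1}-\riz_k)$ avoids the square containing $\spec(\riz_k)$, the partial products behave like a martingale with respect to $(\mathcal{F}_k)$: one has $\int f\, d\riz = \lim_k \int f \riz_k \, dm$, and likewise for $\nu$, and $\mathbb{E}[\riz_{k+1}\mid\mathcal{F}_k] = \riz_k$ in the $L^1(m)$ sense.

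Next I would look at the likelihood ratio $X_k := \prod_{\ell\le k} \dfrac{1 + \mathrm{Re}\,[a_\ell R(j_\ell,L_\ell)]}{1 + \mathrm{Re}\,[b_\ell R(j_\ell,L_\ell)]}$, which is a nonnegative martingale under $\nu$; by the martingale convergence theorem it converges $\nu$-a.e., and the dichotomy $\riz\perp\nu$ versus $\riz\ll\nu$ is governed by whether the limit is $0$ $\nu$-a.e. The standard computation is to estimate $\mathbb{E}_\nu\big[\sqrt{X_{k+1}/X_k}\,\big|\,\mathcal{F}_k\big]$ and show it is bounded above by $\exp(-c|a_{k+1}-b_{k+1}|^2)$ for a universal $c>0$; summing the exponents and using $a-b\notin\ell^2$ forces $\prod_k \mathbb{E}_\nu[\sqrt{X_{k+1}/X_k}\mid\mathcal{F}_k] = 0$, hence $X_k\to 0$ $\nu$-a.e., which yields $\riz\perp\nu$. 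To make the conditional-expectation estimate I would integrate over the ``block'' of frequencies introduced at step $k+1$: since $R(j_{k+1},L_{k+1}) = \sum_\varkappa W_\varkappa$ with $|\sum_\varkappa W_\varkappa|\le 1$ pointwise and $\sum_\varkappa|W_\varkappa|\ge\delta$, and since the large lacunae $L_{k+1} > 2\deg\riz_k$ make the relevant monomials orthogonal both to $\mathcal{F}_k$ and to each other, the $L^2$-mass of $\mathrm{Re}\,[(a_{k+1}-b_{k+1})R(j_{k+1},L_{k+1})]$ is comparable to $|a_{k+1}-b_{k+1}|^2\,\|R(j_{k+1},L_{k+1})\|_{L^2}^2 \gtrsim \delta^2 |a_{k+1}-b_{k+1}|^2$; a Taylor expansion of $\sqrt{(1+s)/(1+t)}$ around $s=t=0$, valid because $|a_\ell|,|b_\ell|<1$ keep each factor bounded away from $0$, then produces the claimed Gaussian-type bound.

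The main obstacle I anticipate is the lower bound $\|R(j,L)\|_{L^2(\Tbb)}^2 \gtrsim \delta^2$ together with the genuine orthogonality needed to turn the conditional expectation into a product of one-block estimates: on the circle $R(j,L)=\sum_\varkappa W_\varkappa$ is a sum of distinct monomials $z^{d_\varkappa}$ with $|d_{\varkappa_1}-d_{\varkappa_2}|\ge L$, so $\|R(j,L)\|_{L^2}^2 = \sum_\varkappa \|W_\varkappa\|_{L^2}^2$, and the pointwise bound $\sum_\varkappa |W_\varkappa|\ge\delta$ must be converted to an $L^2$ lower bound; here the key point is that each $W_\varkappa$ is a single monomial, so $\|W_\varkappa\|_{L^2} = \|W_\varkappa\|_{L^\infty}$, and $\sum_\varkappa \|W_\varkappa\|_{L^\infty} \ge \|\sum_\varkappa|W_\varkappa|\|_{L^1}\ge\delta$ gives $\sum_\varkappa\|W_\varkappa\|_{L^2}^2 \ge (\sum_\varkappa\|W_\varkappa\|_{L^\infty})^2 \cdot (\text{number of terms})^{-1}$, which is not yet uniform; instead one uses $\|\sum_\varkappa W_\varkappa\|_{L^2}^2 = \sum_\varkappa\|W_\varkappa\|_{L^2}^2 \ge \big(\sum_\varkappa\|W_\varkappa\|_{L^2}\big)^2/D \ge \delta^2/D$ \emph{fails} to be uniform in $D$, so the correct route is the one used for the classical generalized Riesz products: one does not need a uniform lower bound on $\|R(j,L)\|_{L^2}$ per block but rather exploits that the full collection of monomials appearing across all blocks at step $k+1$ has total $L^2$-mass controlled from below via $\|R(j_{k+1},L_{k+1})\|_{L^\infty}\ge\|R(j_{k+1},L_{k+1})\|_{L^1}\ge$ (a genuine constant coming from the $\sum_\varkappa|W_\varkappa|\ge\delta$ hypothesis and duality), and this is precisely where the careful bookkeeping of Peyri\`ere's original argument must be re-run. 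Once that lower bound is secured, the remaining steps are the routine martingale estimates sketched above.
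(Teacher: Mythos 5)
Your overall strategy --- a Kakutani--Peyri\`ere martingale dichotomy for the likelihood ratios $X_k$, with a conditional Hellinger-type estimate per block --- is a legitimate way to attack the statement, and it is considerably more than the paper itself does: the paper's entire proof of Proposition~\ref{p_gnrlz_1} is a citation of \cite[Theorem~3.3]{Du03}, where an argument of exactly this kind is carried out for generalized Riesz products on the circle. So you are not so much deviating from the paper as trying to reprove its black box. Unfortunately the attempt does not close, and it fails at precisely the point you yourself single out as the main obstacle.

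The gap is the uniform lower bound on the $L^2(\Tbb)$ mass of each block. Your estimate $\mathbb{E}_\nu\bigl[\sqrt{X_{k+1}/X_k}\,\big|\,\mathcal{F}_k\bigr]\le\exp(-c|a_{k+1}-b_{k+1}|^2)$ with a universal $c>0$ requires $\|R(j_{k+1},L_{k+1})\|_{L^2(\Tbb)}\ge c'>0$ uniformly in $k$. On the circle $R(j,L)=\sum_\varkappa c_\varkappa z^{d_\varkappa}$ and $|W_\varkappa(\za)|\equiv|c_\varkappa|$, so the hypothesis $\sum_\varkappa|W_\varkappa(\za)|\ge\de$ controls only $\sum_\varkappa|c_\varkappa|$, whereas $\|R(j,L)\|_{L^2}^2=\sum_\varkappa|c_\varkappa|^2$, which can be as small as $\de^2/D$. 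Your proposed repair --- ``$\|R\|_{L^\infty}\ge\|R\|_{L^1}\ge$ a genuine constant coming from $\sum_\varkappa|W_\varkappa|\ge\de$ and duality'' --- does not work: $\|R\|_{L^1}$ is \emph{not} bounded below by the $\ell^1$ norm of the coefficients (the inequality goes the other way), and indeed $R(j,L)=\frac{\de}{D}\sum_{\varkappa=1}^{D}z^{\,j+\varkappa L}$ satisfies every clause of Definition~\ref{d_RW_Li} while $\|R(j,L)\|_{L^2}=\de D^{-1/2}\to0$. Worse, taking such blocks with $D_k$ growing fast enough makes both $\riz(R,J,a)$ and $\riz(R,J,b)$ absolutely continuous with almost everywhere positive $L^2$ densities, hence mutually absolutely continuous even when $a-b\notin\ell^2$; so the key estimate is genuinely unobtainable from the hypotheses as you state them, and the argument must instead import the precise (quadratic) block normalization under which \cite[Theorem~3.3]{Du03} is proved --- or simply cite that theorem, as the paper does. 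A secondary, fixable point: the factors of a generalized Riesz product are not independent, so $\mathbb{E}[\riz_{k+1}\mid\mathcal{F}_k]=\riz_k$ and the factorization of the conditional expectations are not automatic; they require the quasi-orthogonality bookkeeping (relative to the gaps $L_{k+1}>2\dg\riz_k$) that you defer to ``Peyri\`ere's original argument''.
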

\begin{proof}
It is known that Peyri\`ere's proof of Proposition~\ref{p_pey}
extends to the pairs of generalized Riesz products on the unit circle 
(see, for example, \cite[Theorem~3.3]{Du03}).
\end{proof}

\subsection{Mutually singular generalized Riesz products}

\begin{proposition}\label{p_gnrlz}
Suppose that $\riz(R, J, a)$ and $\riz(R, J, b)$
are correctly defined generalized Riesz products
on the sphere $S_n$, $n\ge 2$.
Assume that $a-b \notin \ell^2$.
Then
\[
\riz_\za(R, J, a)\bot \riz_\za(R, J, b).
\]
for all $\za\in\sph$.
\end{proposition}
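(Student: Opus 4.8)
The plan is to reduce the sphere statement to the unit-circle statement (Proposition~\ref{p_gnrlz_1}) by passing to slices, exactly as in the proof of Theorem~\ref{t_snglr_intro}, but now exploiting the fact that in the generalized setting no scrambling is needed. First I would fix an arbitrary $\za\in\sph$ and write out the slice-product $\riz_\za(R, J, a)$ explicitly. Since $R(j_k, L_k) = \sum_\varkappa W_\varkappa(j_k, L_k)$ is a sum of homogeneous holomorphic polynomials whose degrees are $\ge j_k$ and are separated by at least $L_k$, the restriction $\la\mapsto R(j_k, L_k)(\la\za)$ is a holomorphic polynomial in $\la\in\Tbb$ of the form $\sum_\varkappa W_\varkappa(\za)\,\la^{\dg W_\varkappa}$, with spectral gaps at least $L_k$. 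Thus $\riz_\za(R, J, a)(\la) = \riz(R(\la\za), J, a)$ is precisely a generalized Riesz product on $\Tbb$ in the sense of Subsection~\ref{ss_GRP}, built from the polynomials $R(\la\za)$ and the coefficient sequence $a$; and similarly for $b$, using the \emph{same} index set $J$ and the same lacunae parameters $L_k$. (One must check that the inductive construction of $J$ that makes $\riz(R,J,a)$ correctly defined on $\sph$ also makes each slice correctly defined on $\Tbb$; this is immediate since $\spec(\riz_\za)_k \subset \spec(\riz_k)$.)

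Next I would verify the hypothesis of Proposition~\ref{p_gnrlz_1} for these two circle products. The coefficient sequences of the slice products are $a$ and $b$ themselves (the generalized construction has no ``$R_{j_k}(\za)$'' factor entering the coefficient, in contrast to the standard construction in Subsection~\ref{ss_sph_riz}): the $k$-th factor of $\riz_\za(R,J,a)$ is $1 + \mathrm{Re}\,[a_k R(j_k, L_k)(\la\za)]$, and although the single polynomial $R(j_k, L_k)(\la\za)$ is replaced by a lacunary sum of monomials, the relevant coefficient sequence for Peyri\`ere's criterion is still governed by $a_k$, not by $a_k$ times a slice value. Hence $a - b\notin\ell^2$ directly gives the needed hypothesis on the circle, \emph{for every} $\za$, with no almost-everywhere caveat. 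Applying Proposition~\ref{p_gnrlz_1} then yields $\riz_\za(R,J,a)\bot\riz_\za(R,J,b)$, which is the assertion.

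The main subtlety — and the step I would write most carefully — is the precise bookkeeping showing that the circle product obtained by slicing fits Definition~\ref{d_RW_Li} with coefficient sequence exactly $a$ (resp.\ $b$), and that the degree-separation and induced spectral-gap conditions are inherited from the sphere construction. In particular one needs the four bulleted bounds in Definition~\ref{d_RW_Li} to survive restriction: $\dg W_\varkappa \ge j_k$ is unchanged, $|\dg W_{\varkappa_1} - \dg W_{\varkappa_2}|\ge L_k$ is unchanged, $|\sum_\varkappa W_\varkappa(\za)|\le 1$ becomes $|R(j_k,L_k)(\la\za)|\le 1$ for $\la\in\Tbb$ (since $|\la|=1$ and the supremum on $\sph$ dominates), and the lower bound $\sum_\varkappa |W_\varkappa(\za)|\ge\de$ likewise passes to the slice with the same $\de$. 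Once this is in place, the reason the argument is cleaner than in Theorem~\ref{t_snglr_intro} — namely that we get mutual singularity for \emph{all} $\za$ rather than merely for $\si$-a.e.\ $\za$ after scrambling — is exactly the built-in lower bound $\sum_\varkappa |W_\varkappa(\za)|\ge\de$, which is the design feature of generalized Riesz pairs. I would close by noting that, combined with Proposition~\ref{p_disint_sng_two}, this immediately gives $\riz(R,J,a)\bot\riz(R,J,b)$ on $\sph$, and with $b=\mathbf 0$ the desired direct analog of Zygmund's dichotomy.
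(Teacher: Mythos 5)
Your proposal is correct and follows essentially the same route as the paper: fix $\za$, observe that the slice-products are generalized Riesz products on $\Tbb$ based on the \emph{same} coefficient sequences $a$ and $b$, and apply Proposition~\ref{p_gnrlz_1}. The only difference is that you spell out the bookkeeping (that the bounds in Definition~\ref{d_RW_Li} and the spectral gaps survive restriction to a slice, and that no factor $R_{j_k}(\za)$ enters the coefficients) which the paper's one-line ``Therefore'' leaves implicit; this is a useful clarification but not a different argument.
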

\begin{proof}
Fix a point $\za\in \sph$.
Recall that $\riz_\za(R, J, a)(\la) = \riz(R(\la\za), J, a)$, $\la\in\Tbb$.
Therefore, $\riz_\za(R, J, a)$ and $\riz_\za(R, J, b)$ are generalized Riesz products on $\Tbb$ and are based on the
sequences $a$ and $b$, respectively. Since $a-b\notin \ell^2$, we have 
$\riz_\za(R, J, a)\bot \riz_\za(R, J, b)$ by Proposition~\ref{p_gnrlz_1}.
\end{proof}

\begin{theorem}\label{t_msnglr_generlz}
Suppose that $\riz(R, J, a)$ and $\riz(R, J, b)$
are correctly defined generalized Riesz products on the sphere $S_n$, $n\ge 2$.
Assume that $a-b \notin \ell^2$.
Then
\[
\riz(R, J, a) \bot \riz(R, J, b).
\]
\end{theorem}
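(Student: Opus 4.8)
The plan is to combine the two ingredients that have already been assembled: the slice-wise mutual singularity from Proposition~\ref{p_gnrlz} and the disintegration machinery used in Proposition~\ref{p_disint_sng_two}. First I would invoke Proposition~\ref{p_gnrlz}, which under the hypothesis $a-b\notin\ell^2$ gives $\riz_\za(R,J,a)\bot\riz_\za(R,J,b)$ for \emph{every} $\za\in\sph$ — in particular for $\sgh$-almost all $\za$. Second, I would observe that the disintegration formula \eqref{e_disint_main} of Lemma~\ref{l_disint} was stated for standard Riesz products built from RW-sequences, but its proof only used that each partial product $\riz_\varkappa$ is a nonnegative continuous function with $\|\riz_\varkappa\|_{L^1(\sph)}=1$, that $\riz_\varkappa\overset{w^\ast}\to\riz$, and that the slices $(\riz_\varkappa)_\za$ are partial products of the classical circle Riesz product $\riz_\za$ hence converge weak$^\ast$ to it. All of these facts hold verbatim for the generalized Riesz products of Subsection~\ref{ss_GRP}: the partial products $\riz_k=\riz_k(R,J,a)$ are nonnegative (each factor $1+\mathrm{Re}[a_{k+1}R(j_{k+1},L_{k+1})]\ge 0$ since $\sum_\varkappa W_\varkappa$ has modulus at most $1$ and $a_{k+1}\in\Dbb$), have $L^1$-norm $1$, converge weak$^\ast$ to $\riz$ by construction, and their slices are exactly the partial products of the circle generalized Riesz product $\riz_\za(R,J,a)$. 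Hence Lemma~\ref{l_disint} and its analog for the other pair hold in the generalized setting.

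Third, with the disintegration in hand, I would run the argument of Proposition~\ref{p_disint_sng_two} unchanged: since $\riz_\za(R,J,a)$ and $\riz_\za(R,J,b)$ are probability measures on $\Tbb$, the integral $\int_{\cpn}\|\riz_\za(R,J,a)-\riz_\za(R,J,b)\|\,d\sgh(\za)\le 2<\infty$, so $\riz(R,J,a)-\riz(R,J,b)$ is a decomposable measure in the sense of \cite[Definition~2]{AD20}; applying \cite[Theorem~2.6]{AD20} yields $|\riz(R,J,a)-\riz(R,J,b)|=\int_{\cpn}|\riz_\za(R,J,a)-\riz_\za(R,J,b)|\,d\sgh(\za)$ in the weak sense. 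Since $\riz_\za(R,J,a)\bot\riz_\za(R,J,b)$ for a.e.\ $\za$ means $\|\riz_\za(R,J,a)-\riz_\za(R,J,b)\|=2$ a.e., we get $\|\riz(R,J,a)-\riz(R,J,b)\|=2$, and for probability measures this is equivalent to mutual singularity. Alternatively, and even more economically, I would just restate Proposition~\ref{p_disint_sng_two} with ``Riesz triple'' replaced by ``generalized Riesz pair'' — its proof is formal once the disintegration lemma is available — and then Theorem~\ref{t_msnglr_generlz} is immediate from Proposition~\ref{p_gnrlz} plus this restated proposition.

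The main obstacle, such as it is, is purely bookkeeping: verifying that Lemma~\ref{l_disint} and Proposition~\ref{p_disint_sng_two} genuinely transfer to generalized Riesz products, i.e.\ that nothing in their proofs secretly used homogeneity of the $R_{j_k}$ or the specific RW-normalization $\|R_j\|_{L^\infty}=1$ rather than the weaker bound $|\sum_\varkappa W_\varkappa|\le 1$ that makes the partial products nonnegative. I expect this to go through without difficulty, since both proofs are soft weak$^\ast$-convergence arguments. No genuinely new idea is needed beyond recognizing that the slice-product of a generalized Riesz product on $\sph$ is again a generalized Riesz product on $\Tbb$ — which is exactly the content of Proposition~\ref{p_gnrlz} — and that the disintegration identity is insensitive to the replacement of standard by generalized products.

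\begin{proof}
As noted in the proofs of Lemma~\ref{l_disint} and Proposition~\ref{p_disint_sng_two}, the only properties of the standard Riesz product used there are the following: the partial products $\riz_k = \riz_k(R, J, a)$ are nonnegative continuous functions on $\sph$ with $\|\riz_k\|_{L^1(\sph)} = 1$, they converge weak$^\ast$ to $\riz(R, J, a)$, and for each $\za\in\cpn$ the slice $(\riz_k)_\za$ is a partial product of the classical Riesz product $\riz_\za(R, J, a)$ on $\Tbb$, hence $(\riz_k)_\za \overset{w^\ast}\longrightarrow \riz_\za(R, J, a)$. For a generalized Riesz product all of these hold: each factor $1 + \mathrm{Re}\,[a_{k+1} R(j_{k+1}, L_{k+1})]$ is nonnegative because $|R(j, L)| = |\sum_\varkappa W_\varkappa| \le 1$ on $\sph$ and $a_{k+1}\in\Dbb$, so $\riz_k \ge 0$ and $\|\riz_k\|_{L^1(\sph)} = 1$; the weak$^\ast$ convergence $\riz_k \overset{w^\ast}\longrightarrow \riz(R, J, a)$ is built into the construction of Subsection~\ref{ss_GRP}; and by the integration by slices formula the slice $(\riz_k)_\za$ is precisely the $k$-th partial product of the generalized Riesz product $\riz_\za(R, J, a)$ on $\Tbb$, which converges weak$^\ast$ to $\riz_\za(R, J, a)$. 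Consequently, the conclusions of Lemma~\ref{l_disint} and Proposition~\ref{p_disint_sng_two} remain valid when $(R, J, a)$ and $(R, J, b)$ are generalized Riesz pairs.

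By Proposition~\ref{p_gnrlz}, the hypothesis $a - b\notin \ell^2$ gives $\riz_\za(R, J, a) \bot \riz_\za(R, J, b)$ for all $\za\in\sph$, in particular for $\sgh$-almost all $\za\in\cpn$. Applying the (generalized) version of Proposition~\ref{p_disint_sng_two}, we conclude that $\riz(R, J, a) \bot \riz(R, J, b)$.
\end{proof}
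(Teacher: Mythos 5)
Your proof is correct and follows essentially the same route as the paper's: Proposition~\ref{p_gnrlz} for slice-wise singularity combined with the disintegration machinery of Lemma~\ref{l_disint} and Proposition~\ref{p_disint_sng_two}. You are in fact more careful than the published proof, which cites only Lemma~\ref{l_disint} and carries over stray $R_{j_k}\circ U_k$ notation from the standard-product case, whereas you explicitly verify that the disintegration identity and Proposition~\ref{p_disint_sng_two} transfer to generalized Riesz products before applying them.
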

\begin{proof}
Indeed, by disintegration formula,
\[
\begin{split}
\riz(R_{j_k}\circ U_k, J, a_k) &= \int_{\cpn} \riz_\za(R_{j_k}\circ U_k, J, a_k)\, d\sgh(\za),\\
\riz(R_{j_k}\circ U_k, J, b_k) &= \int_{\cpn} \riz_\za(R_{j_k}\circ U_k, J, b_k)\, d\sgh(\za).
\end{split}
\]
Hence, combining Proposition~\ref{p_gnrlz} and Lemma~\ref{l_disint}, we obtain
\[
\riz(R\circ U, J, a)\bot \riz(R\circ U, J, b),
\]
as required.
\end{proof}

Setting $b=\mathbf{0}$ in Theorem~\ref{t_msnglr_generlz}, we obtain the following direct analog
of part~(ii) of Zygmund's dichotomy.

\begin{cor}\label{c_zyg_true}
Suppose that $\riz(R, J, a)$ is a correctly defined generalized Riesz product
on the sphere $S_n$, $n\ge 2$.
Assume that $a\notin \ell^2$.
Then
$
\riz(R, J, a) \bot \si.
$
\end{cor}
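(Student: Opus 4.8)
\medskip
\noindent\textbf{Proof proposal.}
The plan is to obtain Corollary~\ref{c_zyg_true} as the special case $b=\mathbf{0}$ of Theorem~\ref{t_msnglr_generlz}, so the only real work is to check that this specialization is admissible and that it identifies the correct measure. First I would observe that, running the inductive construction of Subsection~\ref{ss_GRP} with the coefficient sequence $b=\mathbf{0}$, every factor $1+\mathrm{Re}\,[b_k\,R(j_k,L_k)]$ reduces to the constant $1$; hence each partial product $\riz_k$ equals $1$, the indices $j_k$ may be chosen arbitrarily since $\spec(\riz_{k+1}-\riz_k)=\varnothing$ at every step, and the weak* limit is the constant function $1$, i.e.\ the measure $\si$ itself. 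In particular $\riz(R,J,\mathbf{0})$ is trivially a correctly defined generalized Riesz product and $\riz(R,J,\mathbf{0})=\si$.

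Next I would verify the hypotheses of Theorem~\ref{t_msnglr_generlz} for the pair of sequences $(a,\mathbf{0})$: by assumption $\riz(R,J,a)$ is correctly defined, and we have just seen the same for $\riz(R,J,\mathbf{0})$; moreover $a-\mathbf{0}=a\notin\ell^2$ is precisely the hypothesis of the corollary. Applying Theorem~\ref{t_msnglr_generlz} then gives $\riz(R,J,a)\bot\riz(R,J,\mathbf{0})$, which, by the identification above, is exactly $\riz(R,J,a)\bot\si$.

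I do not expect any genuine obstacle here; the single point that deserves an explicit line is the remark that the degenerate sequence $\mathbf{0}$ is admitted by the construction of Subsection~\ref{ss_GRP} and yields $\si$, after which the statement is immediate. If one prefers to avoid the degenerate construction altogether, an alternative is to rerun the proof of Theorem~\ref{t_msnglr_generlz} verbatim with $\riz(R,J,b)$ replaced by $\si$ and each slice $\riz_\za(R,J,b)$ replaced by $\meal$, invoking Proposition~\ref{p_gnrlz_1} with the zero sequence on every slice and then Lemma~\ref{l_disint}; this is the exact analog of the passage from Theorem~\ref{t_snglr_intro} to Corollary~\ref{c_zyg_sph}.
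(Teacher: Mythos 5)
Your proposal is correct and follows exactly the paper's own route: the paper obtains Corollary~\ref{c_zyg_true} precisely by setting $b=\mathbf{0}$ in Theorem~\ref{t_msnglr_generlz}, and your explicit check that the degenerate construction with $b=\mathbf{0}$ yields the measure $\si$ is a reasonable (if unstated in the paper) verification of that specialization.
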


\bibliographystyle{amsplain}
\providecommand{\bysame}{\leavevmode\hbox to3em{\hrulefill}\thinspace}
\providecommand{\MR}{\relax\ifhmode\unskip\space\fi MR }
\providecommand{\MRhref}[2]{%
  \href{http://www.ams.org/mathscinet-getitem?mr=#1}{#2}
}
\providecommand{\href}[2]{#2}

\end{document}